\newtheorem{theorem}{\bf Theorem}[section]
\newcommand{\lra}{\longrightarrow}
\newcommand{\CC}{\mathbb C}
\newcommand{\cO}{\mathcal{O}}
\newcommand{\cL}{\mathcal{L}}
\newcommand{\BNone}{B(1,d,k)}
\newcommand{\Gone}{G(1,d,k)}
\newcommand{\BN}{B(r,d,k)}
\newcommand{\Galpha}{G(\alpha;r,d,k)}
\newcommand{\GL}{G(\alpha;r,\cL,k)}
\newcommand{\GLi}{G_i(r,\cL,k)}
\newcommand{\GLL}{G_L(r,\cL,k)}
\newcommand{\GLO}{G_0(r,\cL,k)}
\newcommand{\GLr}{G_L(r,\cL,r)}
\newcommand{\GLrr}{G_L(r,\cL,r+1)}
\newcommand{\bnone}{\beta(1,d,k)}
\newcommand{\bn}{\beta(r,d,k)}
\newcommand{\BNl}{B(r,\cL,k)}
\newcommand{\Gr}{\operatorname{Gr}}
\newtheorem{prop}[theorem]{Proposition}
\newtheorem{cor}[theorem]{Corollary}
\newtheorem{conj}[theorem]{Conjecture}
\newtheorem{rem}[theorem]{Remark}
\newtheorem{ex}[theorem]{Example}
\numberwithin{equation}{section}
\title{On coherent systems with fixed determinant}
\author{I. Grzegorczyk and P. E. Newstead}
\address{Department of Mathematics, CSU Channel Islands, One University Drive, Camarillo CA 93012, USA}
\email{ivona.grzegorczyk@csuci.edu}
\address{Department of Mathematical Sciences, The University of Liverpool,
Peach Street, Liverpool L69 7ZL, UK}
\email{newstead@liv.ac.uk}
\keywords{Algebraic curves, coherent systems, stability, moduli spaces, Brill-Noether loci}
\subjclass[2010]{14H60}
\thanks{Both authors are members of the research group Vector Bundles on Algebraic Curves (VBAC). They would like to thank the Isaac Newton Institute, where the first draft of this paper was written during the Moduli Spaces programme in 2011.}
\date{\today}
\begin{document}
\begin{abstract}
Over the past 20 years, a great deal of work has been done on the moduli spaces of coherent systems on algebraic curves. Until recently, however, there has been very little work on the fixed determinant case, except for the special case of rank 2 and canonical determinant. This situation has changed due to two papers of B.~Osserman, who has obtained lower bounds for the dimensions of the fixed determinant moduli spaces in some cases. Our object in this paper is to show that some of Osserman's bounds are sharp.
\end{abstract}
\maketitle
\section{Introduction}\label{intro}

Let $C$ be a smooth projective curve of genus $g$ over $\CC$ and let $J^d(C)$ denote the Jacobian of line bundles on $C$ of degree $d$. In classical Brill-Noether theory, one considers {\it linear systems} $(\cL,V)$, where $\cL$ is a line bundle of degree $d$ and $V$ is a linear subspace of $H^0(C,\cL)$ of dimension $k$. The {\it Brill-Noether locus} $\BNone$ (denoted classically by $W^{k-1}_d$) is defined by
$$\BNone:=\{\cL\in J^d(C)\,|\,h^0(\cL)\ge k\},$$
where $h^0(\cL):=\dim H^0(\cL)$.
We write also 
$$\Gone:=\{(\cL,V)\,|\,\cL\in J^d(C),\dim V=k\}$$
for the corresponding variety of linear systems $(\cL,V)$ (this is denoted classically by $G^{k-1}_d$). Clearly there is a morphism $\Gone\to \BNone$, whose fibre over $\cL$ is the Grassmannian $\mbox{Gr}(k,h^0(\cL))$. We write also 
$$\bnone:=g-k(k-d+g-1),$$
a number which is known as the {\it Brill-Noether number} or the {\it expected dimension} of $\Gone$.

The main results of the classical theory (see \cite{acgh}) are most easily stated in terms of $\Gone$ and are as follows:
\begin{itemize}
\item[(i)] every irreducible component of $\Gone$ has dimension at least $\bnone$;
\item[(ii)] $\Gone$ is smooth of dimension $\bnone$ at $(\cL,V)$ if and only if the {\it Petri map} $V\otimes H^0(\cL^*\otimes K_C)\to H^0(K_C)$, given by multiplication of sections, is injective;
\item[(iii)] $\Gone\ne\emptyset$ if $\bnone\ge0$;
\item[(iv)] for general $C$, $\Gone$ is empty if $\bnone<0$, smooth of dimension $\bnone$ if $\bnone\ge0$ and irreducible if $\bnone>0$.
\end{itemize}
These statements together justify the use of the term ``expected dimension''. The proof of (i) depends on representing $\BNone$ as a degeneracy locus while (ii) is proved by identifying the Zariski tangent space of $\Gone$ in terms of the Petri map; (iv) is proved by showing that, for general $C$, the Petri map is injective for every $(\cL,V)$.

Replacing $\cL$ by a vector bundle $E$ of rank $r$, we can define
$$\BN:=\{E\in M(r,d)|h^0(E)\ge k\},$$
where $M(r,d)$ is the moduli space of stable bundles of rank $r$ and degree $d$. The pairs $(E,V)$ are now called {\it coherent systems}; we can consider the moduli of these either as a stack or as a moduli space with a suitable stability condition (see section \ref{back} for further details). It turns out that the analogues of (i) and (ii) are true but not those of (iii) and (iv); in particular, the non-emptiness of the moduli space (even for a general curve) is a delicate question which is far from being resolved. 
However, the analogues of (iii) and (iv) are true in sufficiently many cases that the {\it Brill-Noether number}
$$\bn:=r^2(g-1)+1-k(k-d+r(g-1))$$
can reasonably be regarded as the expected dimension of the moduli space.

Suppose now that we fix the determinant of $E$. The Brill-Noether locus can still be represented as a degeneracy locus and every component has dimension at least $\bn-g$; however, this lower bound on dimension is often not best possible and cannot in general be sensibly regarded as the expected dimension. One case in which almost everything works well is when $E$ has rank $2$ and $\det E=K_C$; there is then a modified Brill-Noether number and the only unsettled issue is (iii). There are quite strong results in this direction (see \cite{te3,lnp}), but there are values of $g$ and $k$ for which the Brill-Noether number is non-negative but it is unknown whether the Brill-Noether loci and the moduli spaces of coherent systems are non-empty.

For $\det E\not\simeq K$, the situation has changed recently due to work of Osserman \cite{o1,o2}. For rank $2$ and some cases in higher rank, he has obtained improved lower bounds on the dimension of the moduli stack, which lead to a general conjecture. He gives also some examples in which these bounds are sharp in the sense that there exists a component of the moduli stack of the modified expected dimension. It is the purpose of the present note to give more general examples of this phenomenon.

In section \ref{back}, we define $\alpha$-stability for coherent systems and review the results on lower bounds for the dimensions of moduli spaces of $\alpha$-stable coherent systems with fixed determinant. We also formulate a conjecture (Conjecture \ref{conj}) which is compatible with these results. Section \ref{summ} contains our main results, which can be summarised by saying that the lower bounds of Conjecture \ref{conj} are sharp when $k\le r+1$. We conclude with some examples in section \ref{ex}.

We work throughout over a smooth projective curve $C$ defined over the complex numbers and denote by $K_C$ the canonical line bundle on $C$.

Our thanks are due to Montserrat Teixidor i Bigas for some helpful comments and to the referee for a careful reading of the paper and some useful observations.

\section{Lower bounds}\label{back}
 
We recall that a {\it coherent system} on $C$ of type $(r,d,k)$ is a pair $(E,V)$ consisting of a vector bundle $E$ of rank $r$ and degree $d$ and a linear subspace $V$ of $H^0(E)$ of dimension $k$. We shall suppose always that $k\ge1$. For any $\alpha\in{\mathbb R}$, the $\alpha$-{\it slope} of $(E,V)$ is defined by
$$\mu_\alpha(E,V):=\frac{d}r+\alpha\frac{k}r.$$
The coherent system $(E,V)$ is $\alpha$-{\it stable} ($\alpha$-{\it semistable}) if, for every proper coherent subsystem $(F,W)$ of $(E,V)$,
$$\mu_\alpha(F,W)<(\le)\mu_\alpha(E,V).$$
Some necessary conditions for the existence of $\alpha$-stable coherent systems are
\begin{equation}\label{nec}
d>0,\ \alpha>0,\ (r-k)\alpha<d.
\end{equation}

There exists a moduli space for $\alpha$-stable coherent systems of type $(r,d,k)$, which we denote by $\Galpha$. As already noted in the introduction, we define the {\it Brill-Noether number} (or {\it expected dimension}) of $\Galpha$ by
$$\bn:=r^2(g-1)+1-k(k-d+r(g-1)).$$
We then have
\begin{itemize}
\item[(i)] every irreducible component of $\Galpha$ has dimension at least $\bn$;
\item[(ii)] $\Galpha$ is smooth of dimension $\bn$ at $(E,V)$ if and only if the {\it Petri map} $V\otimes H^0(E^*\otimes K_C)\to H^0(E\otimes E^*\otimes K_C)$, given by multiplication of sections, is injective.
\end{itemize}
(For these and many other facts about coherent systems and their moduli, see \cite{BGMN, BGMMN1, BGMMN2}.)

\medskip
Now suppose we fix the determinant $\cL$ of $E$ and define moduli spaces $\GL$. These are closed subschemes of $\Galpha$, where $\deg\cL=d$.

\begin{prop}\label{prop1}
Every irreducible component $X$ of $\GL$ has dimension  
\begin{equation}\label{naive}
\dim X\ge\bn-g.
\end{equation}
\end{prop}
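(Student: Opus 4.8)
The plan is to realise $\GL$ as a fibre of the determinant morphism and then invoke the standard lower bound on fibre dimensions, combined with part (i) of the dimension estimates for $\Galpha$ recalled above. Concretely, the assignment $(E,V)\mapsto\det E$ defines a morphism
$$\delta\colon\Galpha\lra J^d(C),$$
and by definition $\GL=\delta^{-1}(\cL)$ is the closed subscheme cut out as the fibre over the chosen point $\cL\in J^d(C)$. Since $\dim J^d(C)=g$, I expect the inequality \eqref{naive} to drop out of the elementary fact that fibres of a morphism cannot drop in dimension by more than the dimension of the target.

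I would carry out the argument one irreducible component at a time, so as to avoid any reducibility of $\Galpha$. First, let $X$ be an irreducible component of $\GL$. Because $\GL\subseteq\Galpha$ and $X$ is irreducible, $X$ is contained in some irreducible component $Y$ of $\Galpha$. By part (i) above, $\dim Y\ge\bn$. Next, restrict $\delta$ to obtain a morphism $\delta|_Y\colon Y\to W$, where $W:=\overline{\delta(Y)}\subseteq J^d(C)$ is an irreducible variety with its reduced structure; then $\delta|_Y$ is dominant and $\dim W\le\dim J^d(C)=g$. Now observe that $X$ is in fact an irreducible component of the fibre $F:=(\delta|_Y)^{-1}(\cL)=Y\cap\GL$: indeed $X\subseteq F\subseteq\GL$, and if $X$ were strictly contained in some irreducible subset of $F$, that subset would contradict maximality of $X$ as a component of $\GL$. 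Finally, the theorem on the dimension of the fibres of a dominant morphism of irreducible varieties (see e.g.\ Shafarevich) gives that every irreducible component of a nonempty fibre of $\delta|_Y$ has dimension at least $\dim Y-\dim W$. Applying this to the component $X$ of $F$ yields
$$\dim X\ge\dim Y-\dim W\ge\dim Y-g\ge\bn-g,$$
which is exactly \eqref{naive}.

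The routine algebro-geometric steps here are the fibre-dimension inequality and the bound $\dim W\le g$; the point requiring the most care is the honest identification of $\delta$ as a morphism and of $X$ as a \emph{component} of the fibre rather than merely a subset of it. On the moduli-space level one must check that $E\mapsto\det E$ genuinely descends to a morphism on the coarse space $\Galpha$ (it does, being induced by the determinant of a universal family, or of any local universal family, and hence independent of the choices involved). The bookkeeping that passes from ``$X$ is a component of $\GL$'' to ``$X$ is a component of the fibre $Y\cap\GL$'' is what makes the component-by-component reduction work, and restricting to the irreducible $Y$ is precisely what allows the classical fibre-dimension theorem—stated for irreducible source and target—to be applied. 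I expect no serious obstacle beyond these verifications; the essential content is simply that fixing the determinant imposes at most $g$ conditions.
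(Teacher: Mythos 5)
Your proposal is correct and is essentially the paper's argument made explicit: the paper's one-line proof (``this follows at once from (i) above'') is precisely the observation that $\GL$ is the fibre of the determinant morphism $\Galpha\to J^d(C)$ over $\cL$, so that fixing the determinant imposes at most $g=\dim J^d(C)$ conditions on each component of $\Galpha$. Your careful verifications (that $\delta$ is a morphism on the coarse space, and that $X$ is a genuine component of the fibre $Y\cap\GL$) are exactly the routine details the paper suppresses.
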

\begin{proof} This follows at once from (i) above.
\end{proof}

It is easy to see that this estimate is not always best possible. Consider, for instance, the trivial case $r=1$. Then $\GL$ is just the Grassmannian $\Gr(k,h^0(\cL))$, which has dimension
\begin{equation}\label{r=1}
k(h^0(\cL)-k)=k(-k+d-g+1+h^1(\cL))=\bnone-g+kh^1(\cL),
\end{equation}
whenever it is non-empty.

A more significant example is given by the case $r=2$, $\cL=K_C$.

\begin{ex}\label{ex1}\begin{em}
Let $X$ be an irreducible component of the moduli space $G(\alpha;2,K_C,k)$. Then (compare \cite[Section 2]{bf} or \cite[Theorem 4.2]{m} or see \cite[Theorem 1.1]{o1})
\begin{equation}\label{kc}
\dim X\ge3g-3-\frac{k(k+1)}2=\beta(2,2g-2,k)-g+\left(\begin{array}{c}k \\2\end{array}\right).
\end{equation}
 Now let $(E,V)\in G(\alpha;2,K_C,k)$ and let $S$ denote the image of $V\otimes H^0(E)$ in $S^2H^0(E)$. The infinitesimal behaviour of $G(\alpha;2,K_C,k)$ at $(E,V)$ is governed by the modified Petri map $\mu:S\to H^0(S^2E)$. In particular, combining \cite[Th\'eor\`eme 3.12]{he} with the results of \cite{bf,m}, one can show, provided $h^1(S^2E)=0$ and $\mu$ is injective, that the dimension of the Zariski tangent space to $G(\alpha;2,K_C,k)$ is 
\begin{eqnarray*}
3g-3-\dim S+\dim\mbox{Gr}(k,h^0(E))&=&3g-3-\left(kh^0(E)-\frac{k(k-1)}2\right)+k(h^0(E)-k)\\&=&3g-3-\frac{k(k+1)}2.
\end{eqnarray*}
It follows then that $G(\alpha;2,K_C,k)$ is smooth of precisely the dimension given by \eqref{kc} at $(E,V)$. For $C$ general and $E$ semistable, it is proved in \cite{te1} that $\mu$ is injective. Moreover, if $E$ is stable, then $h^1(S^2E)=0$ since $S^2E\simeq End\,^0(E)\otimes K$. So, for general $C$, $G(\alpha;2,K_C,k)$ is smooth of precisely the dimension given by \eqref{kc} at $(E,V)$ whenever $E$ is stable.
\end{em}\end{ex}

Based on these examples, it is reasonable to make the following conjecture.

\begin{conj}\label{conj} Every irreducible component $X$ of $\GL$ has dimension
\begin{equation}\label{conjeq}
\dim X\ge\bn-g+\left(\begin{array}{c}k \\r\end{array}\right)h^1(\cL).
\end{equation}\end{conj}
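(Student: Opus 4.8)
The plan is to prove \eqref{conjeq} by a deformation-theoretic argument generalising the proof of Proposition \ref{prop2}, in which the rank-$2$ symmetric square is replaced by an $r$-fold exterior construction adapted to the fixed determinant. Fixing $\det E=\cL$ means deforming $E$ inside the moduli space $M(r,\cL)$ of bundles with fixed determinant, so the infinitesimal deformations of $(E,V)$ are governed by the hypercohomology of a two-term complex built from the traceless endomorphism sheaf $\mathcal{E}nd_0(E)$ in place of $\mathcal{E}nd(E)$. First I would compute the Euler characteristic $\chi_0$ of this complex. Since $\mathcal{E}nd(E)=\mathcal{E}nd_0(E)\oplus\cO_C$ and $\chi(\cO_C)=1-g$, one finds $\chi_0=g-\bn$, and for stable $(E,V)$ the degree-zero hypercohomology vanishes, so the virtual dimension $\dim\mathbb{H}^1-\dim\mathbb{H}^2=-\chi_0$ equals $\bn-g$, recovering Proposition \ref{prop1}. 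The whole content of the conjecture therefore lies in showing that the \emph{effective} obstruction space is smaller than $\dim\mathbb{H}^2$ by exactly $\binom{k}{r}h^1(\cL)$.

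The new ingredient should be the determinant-of-sections map $\wedge^r V\to H^0(\wedge^r E)=H^0(\cL)$, available precisely because $\det E=\cL$ is fixed, with $\binom{k}{r}$-dimensional source (and zero when $k<r$, so that \eqref{conjeq} correctly reduces to \eqref{naive} there). Writing $h^1(\cL)=h^0(K_C\otimes\cL^{-1})$ by Serre duality and noting $\wedge^r E\otimes K_C\otimes\cL^{-1}=K_C$, multiplication of sections yields a map
$$\mu\colon\ \wedge^r V\otimes H^0(K_C\otimes\cL^{-1})\longrightarrow H^0(K_C),$$
whose source has dimension exactly $\binom{k}{r}h^1(\cL)$. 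The key point is that $H^0(K_C)=H^1(\cO_C)^*$ is the cotangent space to $J^d$ along the determinant directions, which are annihilated on fixing $\cL$. I therefore expect the image of $\mu$ to sit inside $\mathbb{H}^2$ as a subspace of genuinely unobstructed directions --- morally, these correspond to moving the image of $\wedge^r V$ within the \emph{fixed} space $H^0(\cL)$ --- which must then be added back to the naive count $\bn-g$.

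Concretely I would proceed in four steps: (1) write down the fixed-determinant deformation complex and verify $-\chi_0=\bn-g$; (2) identify $\mathbb{H}^2$ with the dual of the kernel of a generalised Petri map into $H^0(\mathcal{E}nd_0(E)\otimes K_C)$ by Serre duality; (3) show that $\mu$ is injective and that the primary obstruction factors so as to annihilate its image, producing a $\binom{k}{r}h^1(\cL)$-dimensional unobstructed subspace; (4) apply the inequality $\dim X\ge\dim\mathbb{H}^1-\dim(\text{effective obstruction})$ to obtain \eqref{conjeq}. As a check, for $r=2$ and $\cL=K_C$ one has $\mathcal{E}nd_0(E)\otimes K_C\cong S^2E$ and $h^1(K_C)=1$, so the construction must collapse to the multiplication $V\otimes H^0(E)\to H^0(S^2E)$ whose kernel $\wedge^2 V$ has dimension $\binom{k}{2}$, reproducing Proposition \ref{prop2}; together with the equality \eqref{r=1} for $r=1$ this shows the bound is sometimes attained.

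The hard part will be step (3), which is precisely why the statement is only a conjecture. For $r=2$ with canonical determinant the isomorphism $S^2E\cong\mathcal{E}nd_0(E)\otimes K_C$ lets the sections in $V$ multiply directly into the obstruction sheaf, making the factorisation transparent (one may then appeal to \cite[Corollaire 3.14]{he}). For general $r$, or even for $r=2$ with $\cL\ne K_C$, no such direct multiplication from $V$ into $H^0(\mathcal{E}nd_0(E)\otimes K_C)$ exists --- the twist by $K_C\otimes\cL^{-1}$ obstructs it --- and the multilinear algebra relating $\mu$ to the traceless obstruction space becomes genuinely delicate. Establishing that exactly $\binom{k}{r}h^1(\cL)$ obstruction directions are spurious, rather than merely bounding them, is the crux; and since sharpness is claimed only for $k\le r+1$, any argument must yield an inequality robust enough to permit \eqref{conjeq} to be strict in general.
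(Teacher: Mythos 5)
The statement you set out to prove is Conjecture~\ref{conj}: the paper does \emph{not} prove it, and there is no proof of record to compare yours against. The paper's evidence consists of special cases (Proposition~\ref{prop1} for $k<r$, the computation \eqref{r=1} for $r=1$, Proposition~\ref{prop2} for $r=2$, $\cL=K_C$, and Osserman's partial results from \cite{o1,o2}), and the paper's own theorems concern \emph{sharpness} of the bound \eqref{conjeq} for $k\le r+1$, not its validity. Your proposal must therefore stand on its own, and, as you yourself concede, it is a programme rather than a proof. The bookkeeping in steps (1) and (2) is sound: with $\mathbb{H}^0=0$ for the traceless deformation complex at an $\alpha$-stable point one does get $\dim\mathbb{H}^1-\dim\mathbb{H}^2=\bn-g$, and your consistency checks are correct ($\mathcal{E}nd_0(E)\otimes K_C\cong S^2E$ for $r=2$, $\det E=K_C$, with $h^1(K_C)=1$; the term $\binom{k}{1}h^1(\cL)$ recovering \eqref{r=1}). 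But the entire content of the conjecture sits in your step (3), which you motivate and do not prove.

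Two concrete obstacles make step (3) a genuine gap rather than a deferred verification. First, exhibiting a $\binom{k}{r}h^1(\cL)$-dimensional subspace of $\mathbb{H}^2$ does not by itself improve the bound $\dim X\ge\dim\mathbb{H}^1-\dim\mathbb{H}^2$: one must show that the obstruction map, to all orders, factors through the quotient of $\mathbb{H}^2$ by that subspace --- equivalently, one must produce a fixed-determinant analogue of the Petri map whose kernel and cokernel genuinely control the local structure of $\GL$. In the one higher-rank case where this is known, $r=2$ and $\cL=K_C$, it works precisely because the isomorphism $\mathcal{E}nd_0(E)\otimes K_C\cong S^2E$ converts the whole problem into the modified Petri map $S\to H^0(S^2E)$, to which \cite[Corollaire 3.14]{he} applies; the introduction states explicitly that for general fixed determinant ``it has so far not been possible to identify the correct analogue of the Petri map'' --- that missing identification \emph{is} your step (3). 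Second, your injectivity requirement on $\mu$ fails in general: the determinant-of-sections map $\wedge^r V\to H^0(\cL)$ degenerates whenever large subspaces of $V$ lie in subbundles of $E$ of low rank, which is exactly why Osserman's results in \cite{o2} carry non-degeneracy hypotheses ($V$ not contained in a subbundle of rank $r-2$, etc.) and why the paper warns that the good open substack on which his bound holds ``a priori may even be empty''. At best, carrying out step (3) would establish \eqref{conjeq} on a suitable non-degenerate locus, recovering Osserman's partial results; proving it under $\alpha$-stability alone is the open problem itself, not a step currently available to you.
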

We have already seen that this conjecture holds for $k<r$ by Proposition \ref{prop1}, $r=1$ by \eqref{r=1} and $r=2$, $\cL=K_C$ by Example \ref{ex1}. It has recently been proved by Osserman  \cite{o1} that it holds also when $r=2$ and $h^1(\cL)\le2$. In \cite[Theorem 1.1]{o2}, Osserman proves further that \eqref{conjeq} holds for any component of $\GL$ passing through $(E,V)$ in the following cases.
\begin{itemize}
\item $k=r$, $V$ not contained in a subbundle of $E$ of rank $r-2$;
\item $k=r+1$, $h^1(\cL)=1$, no $r$-dimensional subspace of $V$ contained in a subbundle of $E$ of rank $r-2$;
\item $r=3$, $k=5$ or $6$, $h^1(\cL)=1$, no subspace of $V$ of dimension $2$ contained in a line subbundle of $E$.
\end{itemize}
Note that the first case here proves the conjecture completely when $k=r=2$. In general, however, Osserman notes that some non-degeneracy condition is required. He works in terms of stacks and proves that, for any $r$, $k$ and $\cL$, there is an open substack of the moduli stack for which the bound of the conjecture holds; the problem is to describe this open substack, which a priori may even be empty \cite[section 5]{o2}. In view of Osserman's examples, it remains possible that $\alpha$-stability is a sufficient non-degeneracy condition, so that the conjecture in the form stated above remains open. Some evidence for this is provided in the next section. 

Regarding the sharpness of the bound in Conjecture \ref{conj}, Teixidor \cite{te2} has shown that, for a general curve and $r=2$, the bound \eqref{naive} is sharp for $\BNl$ if $d\le k+2g-2$ and $d$ is sufficiently large compared with $k$ ($d\ge k+2g-2-\frac{2g}k$ if $k$ is even, $d\ge k+2g-2-\frac{2(g-1)}{k+1}$ if $k$ is odd). If we work in terms of coherent systems, the condition $d\le k+2g-2$ is not required.
Moreover, Osserman has proved sharpness of \eqref{conjeq} for $k=r=2$ \cite[Theorem 1.3]{o2}. 

\section{Sharpness of the bounds}\label{summ}

Let us fix integers $r\ge2$, $k\ge1$ and a line bundle $\cL$ of degree $d>0$. Within the allowable range \eqref{nec} for the existence of $\alpha$-stable coherent systems, there are finitely many {\it critical values}
$$\alpha_1<\alpha_2<\cdots<\alpha_L$$
such that the stability conditions change only as $\alpha$ passes through a critical value \cite{BGMN}. We write 
\begin{itemize}
\item $\GLi:=\GL$ for $\alpha_i<\alpha<\alpha_{i+1}$;
\item $\GLL:=\GL$ for allowable $\alpha>\alpha_L$;
\item $\GLO:=\GL$ for $0<\alpha<\alpha_1$.
\end{itemize}
We shall be primarily concerned here with $\GLL$, but will make deductions for other $\GLi$ and for the Brlll-Noether locus $\BNl$ when possible.

For completeness, we begin with the ``trivial'' cases $g=0$ and $g=1$.

\begin{theorem}\label{thm0}Suppose $g=0$ or $1$. Then $\GLi$ is irreducible of dimension $\bn-g$ whenever it is non-empty.
\end{theorem}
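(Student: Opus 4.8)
The plan is to deduce the statement from the corresponding result for the full moduli space $\Galpha$ (without fixed determinant), together with an analysis of the determinant morphism. For $g=0$ and $g=1$ it is known that, whenever $\Galpha$ is non-empty, it is smooth and irreducible of dimension exactly $\bn$; I would take this as the starting point (compare the structural results quoted in \cite{BGMN, BGMMN1, BGMMN2}). Everything then reduces to understanding the fibres of the determinant map $\det\colon\Galpha\to J^d(C)$, $(E,V)\mapsto\det E$, whose fibre over $\cL$ is precisely $\GLi$.

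The case $g=0$ is immediate: on $\mathbb{P}^1$ the variety $J^d(C)$ is a single point, so the condition $\det E\cong\cL$ is vacuous and $\GLi$ coincides with $\Galpha$. Irreducibility and the dimension $\bn-g=\bn$ then follow at once from the result quoted above.

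For $g=1$ I would exploit the translation action of the elliptic curve $C$ on itself. For $x\in C$ let $t_x\colon C\to C$ be translation by $x$; the pullback $(E,V)\mapsto(t_x^*E,t_x^*V)$ defines an action of $C$ on $\Galpha$ that preserves $\alpha$-stability (since $t_x$ is an automorphism of $C$), and this action is compatible with $\det$, inducing on $J^d(C)\cong C$ the assignment $\cL\mapsto t_x^*\cL$. The standard fact that $x\mapsto t_x^*\cL\otimes\cL^{-1}$ is multiplication by $d$ on $C$, a surjective isogeny since $d>0$, shows that $C$ acts transitively on the base $J^d(C)$. Hence all non-empty fibres of $\det$ are isomorphic, and in particular equidimensional; combined with the irreducibility of $\Galpha$ and $\dim\Galpha=\bn$, this forces each non-empty $\GLi$ to have dimension $\bn-1=\bn-g$.

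The remaining point, and the one I expect to be the main obstacle, is the irreducibility of the individual fibre. The equivariant picture identifies $\Galpha$ with the associated bundle $C\times_{C[d]}\GLi$, where the finite group $C[d]$ of $d$-torsion points is the stabiliser of $\cL$; consequently the irreducibility of $\Galpha$ only guarantees that the torsion automorphisms $t_\eta^*$, $\eta\in C[d]$, permute the irreducible components of $\GLi$ transitively, not that there is a single component. To rule out a nontrivial permutation I would argue that $\det$ is a smooth surjection, so that each $\GLi$ is smooth, and then establish connectedness of the fibre directly; the most promising route is a concrete parametrisation of fixed-determinant coherent systems on the elliptic curve, via Atiyah's classification or an equivalent Fourier--Mukai transform, realising $\GLi$ as a manifestly connected space such as a suitable flag or Quot scheme. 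This connectedness step is where the real work lies.
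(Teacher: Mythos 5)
Your genus $0$ case is correct and is in substance identical to the paper's: since $J^d(\mathbb{P}^1)$ is a point, fixing the determinant is vacuous, $\GLi=\Galpha$, and the statement is exactly \cite[Theorem 3.2]{lng0}, which the paper quotes.

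For $g=1$ there is a genuine gap, and you have located it yourself. The translation action does give transitivity of $C$ on $J^d(C)$ (the map $x\mapsto t_x^*\cL$ is surjective because $d>0$), hence all fibres of $\det$ are isomorphic and of pure dimension $\bn-1$; but irreducibility of $\Galpha$ only yields that the group $C[d]$ of $d$-torsion points permutes the irreducible components of a single fibre transitively. Concretely, what you must exclude is that the Stein factorisation of $\det\colon\Galpha\to J^d(C)$ passes through a nontrivial isogeny $C'\to J^d(C)$: if it did, $\Galpha$ would still be irreducible, every fibre $\GLi$ would still be equidimensional of dimension $\bn-1$, and yet each $\GLi$ would be disconnected, with components permuted simply transitively by a subgroup of $C[d]$ --- a situation entirely consistent with everything you actually prove. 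Your proposed remedies (Atiyah's classification, a Fourier--Mukai parametrisation) are plausible directions but are left as a programme, so the argument stops exactly at the decisive point. Note that the paper does not deduce the fixed-determinant statement formally from the unfixed one either: its proof for $g=1$ consists precisely in checking that the proof of \cite[Theorem 4.3]{lng1}, which establishes irreducibility by direct analysis of the moduli space, remains valid when the determinant is held fixed. So the equivariant framework you set up is sound as far as it goes, but the connectedness of $\GLi$ has to come from revisiting the construction of the moduli space itself rather than from the group action, and without that step the theorem is not proved.
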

\begin{proof}
For $g=0$, this follows at once from \cite[Theorem 3.2]{lng0}; for $g=1$, one needs to check that the proof of \cite[Theorem 4.3]{lng1} remains valid when the determinant is fixed.
\end{proof}
\begin{rem}\begin{em}
For $g=1$, the moduli spaces are non-empty if and only if \eqref{nec} holds and either $\gcd(r,d)=1$ and $\bn\ge1$ or $\gcd(r,d)>1$ and $\bn\ge2$. For $g=0$, the condition $\bn\ge0$ is clearly necessary, but no necessary and sufficient condition is known in general.
\end{em}\end{rem}

\begin{theorem}\label{thm1}Suppose $g\ge2$, $k<r$ and $d\ge \max\{k-(g-1)(r-k),1\}$. Then $\GLL$ is non-empty and irreducible of dimension
$$\bn-g.$$
Moreover, $\GLL$ contains a Zariski open subset which is isomorphic to a fibration over $M(r-k,\cL)$ with fibre $\Gr(k,d+(g-1)(r-k))$. 
\end{theorem}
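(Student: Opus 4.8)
The plan is to construct the claimed fibration explicitly and use it to control both the dimension and the irreducibility of $\GLL$. The key structural observation should be that for large $\alpha$ (i.e.\ above all critical values) and $k<r$, an $\alpha$-stable coherent system $(E,V)$ of type $(r,d,k)$ with $\det E=\cL$ must have $V$ generating a subsheaf whose saturation is a subbundle on which the sections are, generically, as simple as possible. I expect that the genericity forced by $\alpha$-stability for $\alpha>\alpha_L$ makes the $k$ sections in $V$ fit into an exact sequence
\begin{equation}\label{seq}
0\lra \cO_C^k\lra E\lra E''\lra 0,
\end{equation}
where $E''$ is a stable bundle of rank $r-k$ and determinant $\cL$. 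First I would verify that for $\alpha$ large the evaluation map $\cO_C^k\to E$ associated to a basis of $V$ is injective as a bundle map (i.e.\ the sections are everywhere linearly independent), so that \eqref{seq} holds with $E''$ locally free; this is the standard large-$\alpha$ picture for coherent systems, and here the hypothesis $k<r$ is what permits the quotient to have positive rank.

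Next I would identify the parameter space for such systems. Given the extension \eqref{seq}, the pair $(E,V)$ is determined by the quotient bundle $E''\in M(r-k,\cL)$ together with the extension data in $\mathrm{Ext}^1(E'',\cO_C^k)\cong H^1(E''^*)^{\oplus k}$, modulo the automorphisms acting on $\cO_C^k$. By Riemann–Roch, $\dim H^1(E''^*)=h^0(E''^*)-\chi(E''^*)$; since $E''$ is stable of positive degree one expects $h^0(E''^*)=0$, giving $\dim\mathrm{Ext}^1(E'',\cO_C)=(r-k)(g-1)+ \deg E''=(r-k)(g-1)+d$. The choice of $V$ inside $H^0(E)$, after accounting for the gauge action, should then contribute the Grassmannian $\mbox{Gr}(k,d+(g-1)(r-k))$ as stated, the integer $d+(g-1)(r-k)$ being exactly $h^0$ of the image sheaf. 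The dimension hypothesis $d\ge k-(g-1)(r-k)$ is precisely what guarantees this Grassmannian is non-empty, and I would check that this is the inequality ensuring the expected number of sections is at least $k$.

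I would then assemble these pieces into the fibration: over the irreducible base $M(r-k,\cL)$ (which is irreducible and of dimension $(r-k)^2(g-1)$ for a stable bundle of fixed determinant), form the relative Grassmannian bundle whose fibre is $\mbox{Gr}(k,d+(g-1)(r-k))$, and exhibit a natural map from an open subset of $\GLL$ to this total space that is an isomorphism onto its image. Irreducibility of $\GLL$ would follow because the base and the fibres are irreducible and the open subset is dense (everything outside it being covered by $\alpha$-instability or by sections failing to be everywhere independent, which cut out proper closed loci). A dimension count then gives
\[
\dim\GLL=\dim M(r-k,\cL)+\dim\mbox{Gr}\big(k,d+(g-1)(r-k)\big),
\]
which I would reconcile with $\bn-g$ by direct substitution of $\dim M(r-k,\cL)=(r-k)^2(g-1)$ and $\dim\mbox{Gr}(k,N)=k(N-k)$ with $N=d+(g-1)(r-k)$.

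The main obstacle I anticipate is establishing that the open subset described really is dense in \emph{every} irreducible component of $\GLL$, rather than merely that it is a non-empty locally closed subset of the expected dimension. Proposition~\ref{prop1} already gives the lower bound $\dim X\ge\bn-g$ for every component $X$; the hard part is the matching upper bound together with the claim that there is only one component. For this I would need to show that \emph{every} $\alpha$-stable point with $\alpha>\alpha_L$ lies in the closure of the locus where \eqref{seq} holds with $E''$ stable — equivalently, that the failure loci (where the evaluation map drops rank, or where the quotient is merely semistable or unstable) are of strictly smaller dimension. Controlling the dimension of these degeneration strata, and ruling out the possibility that they support an extra component of dimension $\bn-g$, is the delicate step and is where the large-$\alpha$ stability condition must be used most carefully.
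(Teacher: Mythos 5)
Your strategy is essentially the paper's: the proof there takes the known structure of $G_L(r,d,k)$ for $k<r$ from \cite{bg} and \cite[Theorem 5.4]{BGMN} --- every $(E,V)\in\GLL$ sits in an exact sequence $0\lra V\otimes\cO\lra E\lra F\lra 0$ with $F$ semistable of rank $r-k$, and the locus where $F$ is stable is an open fibration over $M(r-k,d)$ with fibre $\mbox{Gr}(k,d+(g-1)(r-k))$ --- restricts that fibration to $M(r-k,\cL)$, and then combines Proposition \ref{prop1} with a parameter count on the complement. Note that much of the difficulty you anticipate at the end is vacuous: since the stability condition is constant for $\alpha_L<\alpha<d/(r-k)$, a point of $\GLL$ is $\alpha$-stable for $\alpha$ arbitrarily close to $d/(r-k)$, and if the saturation $E'$ of the image of $V\otimes\cO$ had positive degree $t$, the subsystem $(E',V)$ would have $\alpha$-slope $t/k+\alpha>d/r+\alpha k/r$ for such $\alpha$; a destabilizing quotient of $F$ similarly destabilizes $(E,V)$. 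So for $\alpha>\alpha_L$ there is no ``rank-drop'' or ``unstable quotient'' stratum at all --- this is exactly the content of the cited BGMN theorem --- and the only degenerate locus is where $F$ is strictly semistable. (Relatedly, your assertion that large $\alpha$ forces the quotient to be \emph{stable} is too strong: it forces only semistability, and the strictly semistable locus is genuinely present unless $\gcd(r-k,d)=1$.)

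Two genuine defects remain. First, the step you flag but leave unexecuted --- that the strictly semistable stratum cannot support a component --- is the entire remaining content of the proof. The paper disposes of it by the counting of \cite[Proposition 7.9 and Corollary 7.10]{BGMMN1} (subtracting $g$ to account for fixing the determinant), which shows this stratum has dimension strictly less than $\bn-g$; by Proposition \ref{prop1} no component can then lie inside it, so every component meets the irreducible open fibration locus, giving uniqueness of the component and dimension exactly $\bn-g$. You would also need the converse of your structural claim for non-emptiness: that a generic extension with $F\in M(r-k,\cL)$ and linearly independent extension classes actually yields an $\alpha$-stable coherent system (again supplied by \cite{bg} and \cite[Theorem 5.4]{BGMN}). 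Second, there is an arithmetic error that would derail your closing verification: $\dim M(r-k,\cL)=\left((r-k)^2-1\right)(g-1)$, not $(r-k)^2(g-1)$, since fixing the determinant subtracts $g$ from $(r-k)^2(g-1)+1$; with your value the count gives $\bn-g+(g-1)$, while the correct value gives $\bn-g$ as required. A minor slip in the same vein: the fibre Grassmannian parametrizes $k$-planes in $\mbox{Ext}^1(F,\cO)\cong H^1(F^*)\cong H^0(F\otimes K_C)^*$, whose dimension $d+(g-1)(r-k)$ you computed correctly earlier; it is not ``$h^0$ of the image sheaf''.
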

\begin{proof}
We know from \cite{bg} (see also \cite[Theorem 5.4]{BGMN}) that $G_L(r,d,k)$ contains a Zariski open subset which is isomorphic to a fibration over $M(r-k,d)$ with fibre $\Gr(k,d+(g-1)(r-k))$. Restricting this fibration to $M(r-k,\cL)$, we obtain the last part of the statement and also the fact that $\GLL$ is non-empty if $d\ge \max\{k-(g-1)(r-k),1\}$ and has an irreducible component of dimension $\bn-g$. Moreover (see \cite{bg}), every $(E,V)\in \GLL$ can be written in the form
\begin{equation}\label{tf}
0\lra V\otimes \cO\lra E\lra F\lra0,
\end{equation}
with $F$ semistable. In view of \eqref{naive}, to complete the proof it is sufficient to show that the coherent systems $(E,V)$ which are expressible in the form \eqref{tf} with $F$ strictly semistable and of determinant $\cL$ depend on fewer than $\bn-g$ parameters. This is a simple counting exercise (see for example \cite[Corollary 7.10]{BGMMN1}, noting that, in \cite[Proposition 7.9]{BGMMN1}, one needs to subtract $g$ from the formula to take account of the fixing of the determinant).
\end{proof}

\begin{rem}\begin{em}
If $\gcd(r-k,d)=1$, the Zariski open subset of the theorem is the whole of $\GLL$. Note that $d\ge \max\{k-(g-1)(r-k),1\}$ is precisely the condition for the non-emptiness of $G_L(r,d,k)$.
\end{em}\end{rem}

\begin{cor}\label{cortf}
Under the hypotheses of Theorem \ref{thm1}, suppose further that $\cL$ is a general line bundle of degree $d$. Then, for general $(E,V)\in \GLL$, the bundle $E$ is stable and $(E,V)\in\GLi$ for all $i$.
\end{cor}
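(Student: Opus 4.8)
The plan is to split the statement into a formal part and a substantive part: the formal part is that \emph{if} $E$ is stable and $(E,V)\in\GLL$ then automatically $(E,V)\in\GLi$ for every $i$, and the substantive part is that the general $(E,V)\in\GLL$ does have $E$ stable. I would dispose of the formal part first. Fix a saturated proper subbundle $F'\subsetneq E$ of rank $<r$ and put $W=V\cap H^0(F')$; by the standard reduction it suffices to test $\alpha$-stability on such subsystems. The quantity $\mu_\alpha(E,V)-\mu_\alpha(F',W)$ is an \emph{affine} function of $\alpha$ on the allowable interval $0<\alpha<\frac{d}{r-k}$ coming from \eqref{nec}. If $E$ is stable this function is strictly positive at $\alpha=0$ (because $\mu(F')<\mu(E)$), while membership in $\GLL$ forces it to be $\ge0$ in the limit $\alpha\to\frac{d}{r-k}$ by continuity. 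An affine function that is positive at the left endpoint and nonnegative at the right endpoint is positive throughout the half-open interval, so $(E,V)$ is $\alpha$-stable for every non-critical $\alpha\in(0,\frac{d}{r-k})$, i.e. $(E,V)\in\GLi$ for all $i$ (including $\GLO$).

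It then remains to prove that $E$ is stable for a general $(E,V)\in\GLL$. By Theorem \ref{thm1} the general such system arises from the extension \eqref{tf} with $F\in M(r-k,\cL)$, the data being a general $F$ together with a general $k$-dimensional subspace of $\mathrm{Ext}^1(F,\cO)=H^1(F^*)$, whose dimension is $d+(g-1)(r-k)$; the hypothesis $d\ge k-(g-1)(r-k)$ is precisely the condition that the Grassmannian fibre $\mbox{Gr}(k,h^1(F^*))$ be non-empty. Since stability of $E$ is an open condition and $\GLL$ is irreducible, it suffices to show that for general $\cL$, general $F$ and a general extension class the bundle $E$ is stable. Here generality of $\cL$ makes $F$ general in $M(r-k,d)$, so that the maximal degree of a rank-$b$ subbundle of $F$ takes its smallest generic value.

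For the estimate I would take a saturated subbundle $E'\subsetneq E$ and decompose it along \eqref{tf}: set $K=E'\cap(V\otimes\cO)$ and let $G'\subseteq F$ be the image of $E'$ in $F$. Then $\deg E'=\deg K+\deg G'$, where $\deg K\le0$ since $K$ is a subsheaf of a trivial bundle, and $\deg G'$ is controlled by the stability of $F$ together with the generic subbundle bounds. Substituting into $\mu(E')\ge\frac{d}{r}$ shows that a destabilizing $E'$ can occur only when $G'$ fills up (almost) all of $F$ while $K$ has (almost) maximal degree $0$, forcing $K\cong\cO^a$ with $a<k$. Such an $E'$ corresponds to the extension class lying in the image of $\mathrm{Ext}^1(F,\cO^a)\to\mathrm{Ext}^1(F,\cO^k)$, equivalently to the classifying map $V^*\to H^1(F^*)$ failing to be injective; a general point of $\mbox{Gr}(k,h^1(F^*))$ gives an injective map, so these are excluded. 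The remaining dangerous configurations, in which $K$ has slightly negative degree, fall into finitely many numerical families, each cut out in the Grassmannian by an incidence condition.

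The main obstacle is exactly this dimension count: showing that, for a general extension class, every incidence locus parametrizing a subbundle $E'$ whose image in $F$ has large degree while $K$ has near-zero degree has positive codimension in $\mbox{Gr}(k,h^1(F^*))$. This is where both hypotheses are used — generality of $\cL$ bounds the degrees of subbundles of $F$ and hence the admissible $G'$, while the numerical condition $d\ge k-(g-1)(r-k)$ makes the Grassmannian large enough for a general subspace to avoid all of these loci. Establishing these codimension estimates is the technical heart of the proof, whereas the implication ``$E$ stable $\Rightarrow(E,V)\in\GLi$ for all $i$'' of the first paragraph comes essentially for free.
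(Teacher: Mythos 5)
Your first paragraph is correct: the affine dependence of $\mu_\alpha(E,V)-\mu_\alpha(F',W)$ on $\alpha$, strict positivity at $\alpha=0$ from stability of $E$, and nonnegativity at the right endpoint $\frac{d}{r-k}$ from membership in $\GLL$ do give $\alpha$-stability throughout the allowed range; this is exactly the standard mechanism behind the paper's phrase ``hence $(E,V)\in\GLi$ for all $i$''. The genuine gap is in the substantive half. You reduce the corollary to the claim that, for general $F$ and a general $k$-dimensional subspace of $H^1(F^*)$, the extension \eqref{tf} yields a stable $E$, and then you explicitly defer the codimension estimates for the incidence loci (subbundles $E'$ with $\deg K$ close to $0$ and $G'$ of near-maximal degree) to ``the technical heart of the proof''. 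But those estimates \emph{are} the statement: the only case you actually dispose of is $K\cong\cO^a$, $G'=F$, which reduces to injectivity of $V^*\to H^1(F^*)$, and the remaining configurations with $G'\subsetneq F$ and $\deg K<0$ are not shown to fall into boundedly many families, nor is any codimension bound offered. As written, the proposal asserts rather than proves the key point; it is a plan for re-deriving from scratch a known theorem, not a proof.

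The paper closes this gap by citation rather than computation. By \cite[Theorem 3.3(v)]{BGMMN1}, for general $(E,V)$ in the \emph{variable-determinant} moduli space $G_L(r,d,k)$ the bundle $E$ is stable; hence the non-stable locus is a proper closed subset, and since $\cL$ is general it cannot contain the whole fibre $\GLL$, which is irreducible by Theorem \ref{thm1} --- so the general point of $\GLL$ has $E$ stable, and your affine argument (or the same standard fact) finishes the proof. This is precisely where the generality of $\cL$ enters, and it is the same mechanism as your observation that a general $\cL$ makes $F$ general in $M(r-k,d)$, just applied one level up where the hard work has already been done in the literature. Note also the Remark following Corollary \ref{cortor}: dropping the generality hypothesis on $\cL$ would require a fixed-determinant version of \cite[Theorem 3.3(v)]{BGMMN1}, which is exactly the direct stability analysis you sketch --- a signal that the dimension count you postpone is not routine. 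To repair your write-up with minimal change, replace your last two paragraphs by an appeal to \cite[Theorem 3.3(v)]{BGMMN1} together with the openness of the stable locus and the irreducibility of $\GLL$ from Theorem \ref{thm1}.
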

\begin{proof}
Let $(E,V)\in G_L(r,d,k)$ be general. By \cite[Theorem 3.3(v)]{BGMMN1}, the bundle $E$ is stable and hence $(E,V)\in G_i(r,d,k)$ for all $i$. Under the assumption that  $\cL$ is general, the result follows from this fact and the theorem.
\end{proof}

\begin{theorem}\label{thm2}Suppose $g\ge2$, $d\ge r+1$ and $\cL$ possesses a section with distinct zeroes. Then $\GLr$ is non-empty and irreducible of dimension
$$\beta(r,d,r)-g+h^1(\cL).$$
\end{theorem}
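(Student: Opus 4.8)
The plan is to identify a dense open subset of $\GLr$ with an explicit parameter space built from the trivial bundle by elementary transformations, and then to read off non-emptiness, irreducibility and dimension from that description. First I would show that, for $\alpha>\alpha_L$ and $k=r$, $\alpha$-stability forces $V$ to generate $E$ generically: if the image of the evaluation map $V\otimes\cO\to E$ had rank $s<r$, it would span a coherent subsystem with the same $k=r$ but smaller rank $s$, and since the coefficient of $\alpha$ in its $\alpha$-slope is $r/s>1$, it would violate stability once $\alpha$ is large. Hence the evaluation map is a sheaf injection with torsion cokernel $T$ of length $d$, giving
$$0\lra V\otimes\cO\lra E\lra T\lra0,$$
the rank-zero degeneration of the sequence \eqref{tf} from Theorem \ref{thm1}. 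As $\cO^r\hookrightarrow E$, taking determinants yields $\det E=\cO(D_T)$, where $D_T$ is the length divisor of $T$; thus the fixed-determinant condition $\det E=\cL$ says exactly that $D_T\in|\cL|$.

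Next I would run the converse construction. Using the hypothesis that $\cL$ has a section with distinct zeroes, choose a reduced divisor $D=p_1+\cdots+p_d\in|\cL|$ and build locally free $E$ with $\cO^r\subset E$ and $E/\cO^r=\bigoplus_i\CC(p_i)$; such $E$ automatically has degree $d$ and determinant $\cO(D)=\cL$, and one takes $V$ to be the image of $H^0(\cO^r)=\CC^r$, which is $r$-dimensional. At each $p_i$ the locally free extensions of $\CC(p_i)$ by $\cO^r$ are parametrised, up to the scalar automorphism of $\CC(p_i)$, by a point $\ell_i$ of $\mathbb{P}(\mathrm{Ext}^1(\CC(p_i),\cO^r))=\mathbb{P}^{r-1}$. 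Letting $D$ vary over the open set $U\subset|\cL|$ of reduced divisors and the $\ell_i$ over $(\mathbb{P}^{r-1})^d$ produces an irreducible family $\mathcal{F}$ with
$$\dim\mathcal{F}=\dim|\cL|+d(r-1)=(d-g+h^1(\cL))+d(r-1)=rd-g+h^1(\cL),$$
using $\dim|\cL|=h^0(\cL)-1=d-g+h^1(\cL)$ from Riemann--Roch.

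The map $\mathcal{F}\to\GLr$ is equivariant for $\mathrm{Aut}(\cO^r)=GL_r$ acting diagonally on the $\ell_i$, and two data give isomorphic coherent systems precisely when they lie in one $GL_r$-orbit, since the subsheaf $\cO^r=\mathrm{im}(V\otimes\cO)\subset E$ is intrinsic. Here the hypothesis $d\ge r+1$ enters decisively: $d\ge r+1$ points in general position in $\mathbb{P}^{r-1}$ form a projective frame, so $GL_r$ acts with generic stabiliser equal to the scalars and the generic orbit has dimension $r^2-1$. Subtracting, the image has dimension
$$rd-g+h^1(\cL)-(r^2-1)=\beta(r,d,r)-g+h^1(\cL),$$
and is irreducible because $\mathcal{F}$ is. By the first step every member of $\GLr$ is generically generated and so fits into the sequence above with $D_T\in|\cL|$; those with reduced $T$ are exactly the image of $\mathcal{F}$, while those with non-reduced support form a lower-dimensional locus, so the image is dense and no extra component can arise.

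The main obstacle is the one step I have so far glossed: proving that the generic member of $\mathcal{F}$ is genuinely $\alpha$-stable for $\alpha>\alpha_L$, equivalently that the necessary non-degeneracy holds on a dense open subset. A destabilising subsystem would come from a proper subbundle $F\subset E$ with $\dim(V\cap H^0(F))>\mathrm{rk}\,F$, and one must verify that imposing distinct points and general elementary-transformation data excludes this generically (and handles the borderline equality case by degrees). Controlling these degenerate strata, so that they contribute neither extra components nor larger-dimensional pieces, together with the irreducibility of the reduced-support locus, is precisely where the assumptions $d\ge r+1$ and the existence of a section with distinct zeroes do the real work; the same genericity yields non-emptiness, since $U\neq\emptyset$ makes $\mathcal{F}$ non-empty.
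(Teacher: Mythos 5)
Your construction is, in substance, the paper's own proof: the paper invokes the proof of \cite[Theorem 5.6]{BGMN} to show that every $(E,V)\in\GLr$ sits in an exact sequence $0\lra V\otimes\cO\lra E\lra T\lra0$ with the divisor of $T$ moving in $|\cL|$, computes for fixed $T$ the dimension $\dim\mathrm{Ext}^1(T,V\otimes\cO)-\dim\mathrm{Aut}\,T-\dim\mathrm{Aut}(V\otimes\cO)+1=\beta(r,d,r)-d$, and then adds $\dim|\cL|=h^0(\cL)-1$. Your count $\dim|\cL|+d(r-1)-(r^2-1)$ is exactly this computation in the reduced case (your $(\mathbb{P}^{r-1})^d$ of classes $\ell_i$ is $\mathrm{Ext}^1(T,V\otimes\cO)$ modulo $\mathrm{Aut}\,T$ for $T=\cO_D$ reduced, and your scalar-stabiliser observation is the paper's remark that the centraliser of $(E,V)$ is $\CC^*$, which justifies the $+1$). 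Your large-$\alpha$ slope argument for generic generation is also the correct derivation of the structure step. One small structural difference: rather than stratifying by reduced versus non-reduced support and asserting the latter is lower-dimensional, the paper performs the fixed-$T$ count for \emph{arbitrary} torsion $T$ and uses irreducibility of the whole family of possible $T$; this avoids the stratification you would otherwise have to control.

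The genuine gap is precisely the one you flag yourself: $\alpha$-stability of the generic member of $\mathcal{F}$, which is where non-emptiness lives. The paper settles it by choosing $T=\cO_D$ with $D$ reduced and extension data $(\xi_1,\ldots,\xi_d)$, $\xi_i\in V$, such that \emph{any $r$ of the $\xi_i$ are linearly independent}, citing the proof of \cite[Theorem 5.6]{BGMN} for the verification. Your own setup closes it in a few lines, and you should note where $d\ge r+1$ really enters: since the sections in $V$ are the constant sections of $\cO^r$, any subsystem $(F,W)$ automatically satisfies $\dim W\le\mathrm{rk}\,F$, so for $\alpha>\alpha_L$ the only binding subsystems are saturations $F$ of $W\otimes\cO$ with $\dim W=\mathrm{rk}\,F=s$, and the saturation gains degree at $p_i$ exactly when $\xi_i\in W$; under the general-position condition at most $s$ of the $\xi_i$ lie in an $s$-dimensional $W$, whence $\deg F/s\le 1<d/r$ because $d\ge r+1$. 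In other words, $d\ge r+1$ is the stability (hence non-emptiness) condition, not merely the projective-frame condition guaranteeing scalar stabiliser to which you assign it (though that use is also correct for the orbit-dimension count). With this stability check supplied, your argument matches the paper's proof step for step.
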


Note that, by Bertini's Theorem, the hypothesis on $\cL$ is satisfied whenever $\cL$ is generated.

\begin{proof}[Proof of Theorem \ref{thm2}]
According to the proof of \cite[Theorem 5.6]{BGMN}, every $(E,V)\in \GLr$ can be expressed as a sequence
\begin{equation}\label{tor}
0\lra V\otimes\cO\lra E\lra T\lra0,
\end{equation}
where $T$ is a torsion sheaf whose underlying divisor is the divisor of a section of $\cL$. For fixed $T$, the dimension of the corresponding subspace of $\GLr$ is
$$\dim\mbox{Ext}^1(T,V\otimes\cO)-\dim\mbox{Aut}T-\dim\mbox{Aut}(V\otimes\cO)+1=rd-d-r^2+1=\beta(r,d,r)-d$$
(note that the centraliser of $(E,V)$ in $\mbox{Aut}T\times\mbox{Aut}(V\otimes\cO)$ is ${\mathbb C}^*$). The family of possible $T$ is irreducible of dimension $h^0(\cL)-1$, so  $\GLr$ is irreducible (if it is non-empty) of dimension 
$$\beta(r,d,r)-d+h^0(\cL)-1=\beta(r,d,r)-g+h^1(\cL).$$
It remains to prove that $\GLr$ is non-empty. For this, choose $T=\cO_D$, where $D$ is the divisor of a section of $\cL$ with $d$ distinct zeroes $P_1,\ldots,P_d$. An element of $\mbox{Ext}^1(T,V\otimes\cO)$ is then given by $(\xi_1,\ldots,\xi_d)$ with $\xi_i\in V$. Choose $\xi_i$ so that any subset of $r$ of these vectors is linearly independent. It is shown in the proof of \cite[Theorem 5.6]{BGMN} that the corresponding coherent system $(E,V)$ belongs to $G_L(r,d,r)$. This completes the proof.
\end{proof}

\begin{rem}\begin{em}
Note that $d\ge r+1$ is precisely the condition for the non-emptiness of $G_L(r,d,r)$ \cite[Theorem 5.6]{BGMN}.
\end{em}\end{rem}

\begin{cor}\label{cortor}
Under the hypotheses of Theorem \ref{thm2}, suppose further that $\cL$ is a general line bundle of degree $d$. Then, for general $(E,V)\in \GLr$, the bundle $E$ is stable and $(E,V)\in G_i(r,\cL,r)$ for all $i$.
\end{cor}
\begin{proof}
The proof is the same as for Corollary \ref{cortf}.
\end{proof}

\begin{rem}\begin{em}
It is possible that Corollaries \ref{cortf} and \ref{cortor} are valid without the generality assumption on $\cL$. To show this, one would need to prove a fixed determinant version of \cite[Theorem 3.3(v)]{BGMMN1}.\end{em}\end{rem}

\begin{theorem}\label{thm3}Suppose $g\ge2$ and $\cL$ is generated with $h^0(\cL)\ge r+1$. Then there exists a unique component $G_L$ of $\GLrr$ for which the general point $(E,V)$ is generated. Moreover
$$\dim G_L=(r+1)(h^0(\cL)-r-1)=\beta(r,d,r+1)-g+(r+1)h^1(\cL).$$
\end{theorem}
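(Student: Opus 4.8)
The plan is to identify the generated members of $\GLrr$ with an open subset of a Grassmannian by means of the evaluation (dual-span) sequence, to read off the dimension from there, and then to prove $\alpha$-stability generically for $\alpha>\alpha_L$.

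First I would examine the evaluation map of a generated coherent system $(E,V)$. Surjectivity of $V\otimes\cO\lra E$ forces its kernel to be a torsion-free sheaf of rank $(r+1)-r=1$, hence a line bundle $M$; taking determinants in $0\lra M\lra V\otimes\cO\lra E\lra0$ gives $\cO=M\otimes\cL$, so $M=\cL^*$ and we obtain
\begin{equation*}
0\lra\cL^*\lra V\otimes\cO\lra E\lra0 .
\end{equation*}
Since $\deg\cL^*<0$ we have $H^0(\cL^*)=0$, so $V\hookrightarrow H^0(E)$; dualising yields $0\lra E^*\lra V^*\otimes\cO\lra\cL\lra0$. On the dense locus where $E$ is stable (so that $H^0(E^*)=0$) the image $W$ of $V^*$ in $H^0(\cL)$ is an $(r+1)$-dimensional subspace generating $\cL$, with $E=(\ker(W\otimes\cO\lra\cL))^*$; conversely a general generating subspace $W$ produces such an $(E,V)$. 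These are inverse algebraic families, so the generated locus is isomorphic to an open subset $U$ of $\mbox{Gr}(r+1,H^0(\cL))$; in particular it is irreducible of dimension $(r+1)(h^0(\cL)-r-1)$. Substituting $h^1(\cL)=h^0(\cL)-d+g-1$ into the definition of $\beta(r,d,r+1)$ identifies this with $\beta(r,d,r+1)-g+(r+1)h^1(\cL)$, as required.

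The heart of the proof is to show that the generic point of $U$ gives an $(E,V)$ that is $\alpha$-stable for $\alpha>\alpha_L$. Since $k=r+1>r$, for large $\alpha$ the coefficient of $\alpha$ dominates, and for a generated system no proper subsystem can share the $\alpha$-slope of $(E,V)$; passing to saturations (which only increases $\mu_\alpha$) reduces the test to subbundles, and comparing leading coefficients shows that $(E,V)$ is $\alpha$-stable for large $\alpha$ if and only if
\begin{equation*}
\dim\bigl(V\cap H^0(F)\bigr)\le\mathrm{rk}\,F \quad\text{for every subbundle } F\subsetneq E \text{ with } 0<\mathrm{rk}\,F<r .
\end{equation*}
I would establish this on a dense open subset of $U$: it suffices to exhibit a single generating $W$ for which no such $F$ exists, which can be extracted from the proof of \cite[Theorem 5.6]{BGMN} exactly as in the proof of Theorem \ref{thm2} (alternatively, a dimension count on the incidence variety of pairs $(W,F)$ with $F$ violating the inequality shows the bad locus has positive codimension). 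This genericity of large-$\alpha$ stability is the main obstacle, since one must control destabilising subbundles of every rank $r'<r$ simultaneously.

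Finally I would assemble the conclusion. Generation is an open condition, so the generated locus is open in $\GLrr$; by the previous steps it is nonempty, irreducible, and of dimension $(r+1)(h^0(\cL)-r-1)$. Hence its closure $G_L$ is an irreducible component of $\GLrr$ of that dimension, for any component properly containing it would have the generated locus as a dense open subset and so would coincide with $G_L$. Uniqueness is immediate: the generated locus is irreducible and therefore lies in a single component, so any component whose general point is generated must equal $G_L$.
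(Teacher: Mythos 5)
Your overall architecture is the same as the paper's: identify the generated locus with an open subset of $\mbox{Gr}(r+1,H^0(\cL))$ via the dual span sequences \eqref{gen} and \eqref{dual}, read off the dimension, convert by Riemann--Roch, and get uniqueness from irreducibility plus openness of the generation condition. The genuine gap is exactly at what you yourself call the heart of the proof: the large-$\alpha$ stability of the dual span. You propose to prove it only \emph{generically}, either by extracting an example from the proof of \cite[Theorem 5.6]{BGMN} ``as in Theorem \ref{thm2}'' or by an unexecuted incidence-variety dimension count. The first pointer is off target: Theorem 5.6 of \cite{BGMN} is the $k=r$ torsion-quotient construction $0\lra V\otimes\cO\lra E\lra T\lra 0$, and the general-position argument with the vectors $\xi_i$ used in Theorem \ref{thm2} has no counterpart for the $k=r+1$ dual span; the second is nontrivial (one must control subbundles of every rank simultaneously) and is not carried out, so nonemptiness of your open set inside $\GLrr$ is left unproven. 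The paper instead invokes \cite[Corollary 5.10]{BGMN}, which gives strictly more: \emph{every} generated $(E,V)$ of type $(r,d,r+1)$ with $h^0(E^*)=0$ is $\alpha$-stable for large $\alpha$, so no genericity is needed and the whole locus of generating subspaces lands in $\GLrr$. Indeed your (correctly derived) criterion $\dim\bigl(V\cap H^0(F)\bigr)\le\mathrm{rk}\,F$ can be verified for all such pairs: if it failed, with $W=V\cap H^0(F)$ of dimension $\mathrm{rk}\,F+1$, the kernel of $W\otimes\cO\to E$ would be a rank-one subsheaf of $\cL^*$, so the subsheaf generated by $W$ has degree $\ge d$ and the quotient of $E$ by it is a generated sheaf of degree $\le 0$, hence trivial, contradicting $h^0(E^*)=0$.

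A secondary flaw: you restrict to ``the dense locus where $E$ is stable'' to get $H^0(E^*)=0$ and hence $V^*\hookrightarrow H^0(\cL)$. Density of that locus is not known at that point in the argument (and is close to what you are trying to prove), so this risks circularity; it is also unnecessary, since $h^0(E^*)=0$ holds for \emph{every} $(E,V)\in\GLrr$ --- the paper cites \cite[Lemma 2.9]{BGMMN1}, and it follows directly from large-$\alpha$ stability, as a nonzero map $E\to\cO$ produces a proper subsystem with too many sections per rank. With these two repairs (citing or proving the stability statement for all generated pairs, and dropping the stable-$E$ restriction) your argument coincides with the paper's proof.
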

\begin{proof} The generated coherent systems $(E,V)$ with $\det E=\cL$ form an open subset $U$ of $\GLrr$; we need to show that $U$ is irreducible and of the required dimension.

In fact, given such $(E,V)$, we have an exact sequence 
\begin{equation}\label{gen}
0\lra\cL^*\lra V\otimes{\cO}\lra E\lra0.
\end{equation}
Dualising, we obtain
\begin{equation}\label{dual}
0\lra E^*\lra V^*\otimes{\cO}\lra \cL\lra0.
\end{equation}
Hence $\cL$ is generated by $V^*$ and, since $h^0(E^*)=0$ (see, for example, \cite[Lemma 2.9]{BGMMN1}), $V^*\subset H^0(\cL)$.  Conversely, given $V^*\subset H^0(\cL)$ of dimension $r+1$ and generating $\cL$, the sequences \eqref{dual} and \eqref{gen} define a coherent system $(E,V)\in U$ (for the stability of $(E,V)$, see \cite[Corollary 5.10]{BGMN}). Hence $U$ is isomorphic to some non-empty open subset of $\Gr(r+1,H^0(\cL))$. This proves irreducibility of $U$ and gives $\dim U=(r+1)(h^0(\cL)-r-1)$. The Riemann-Roch Theorem and the formula for $\beta(r,d,r+1)$ now give the result.
\end{proof}

We recall that a {\it Petri curve} is a curve $C$ for which the Petri map 
$$H^0(\cL)\otimes H^0(K_C\otimes \cL^*)\to H^0(K_C)$$
is injective for all line bundles $\cL$ on $C$.
A good description of $G_L(r,d,r+1)$ is known only when $C$ is Petri (see \cite[Theorem 5.11]{BGMN} and \cite[Theorem 3.1]{bbn}). In this case, it is known that, if $\beta(r,d,r+1)>0$, then $G_L(r,d,r+1)$ is irreducible (and smooth) of dimension $\beta(r,d,r+1)$. It is then sensible to ask whether $\GLrr$ is irreducible (and hence $\GLrr=G_L$). Note that $\beta(r,d,r+1)=\beta(1,d,r+1)$ and, if $C$ is Petri, then, by classical Brill-Noether theory, $B(1,d,r+1)$ is irreducible if $\beta(1,d,r+1)>0$ and is a finite set if $\beta(r,d,r+1)=0$.

\begin{cor}\label{cor31} Suppose that $C$ is a Petri curve, $g\ge2$, $0\le\beta(r,d,r+1)\le g$ and $\cL\in B(1,d,r+1)$ is general. (If $\beta(r,d,r+1)=0$, $\cL$ can be any element of the finite set $B(1,d,r+1)$.) Then $\GLrr=G_L$ and consists of a single element $(E_0, V_0)$.  Moreover $(E_0,V_0)\in \GLi$ for all $i$ and, except when $g=r=2$ and $d=4$, the bundle $E_0$ is stable.
\end{cor}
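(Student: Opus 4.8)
The plan is to show that for a general $\cL$ every coherent system in $\GLrr$ is generated, so that $\GLrr$ reduces to the single generated system produced by Theorem \ref{thm3}. First I would record the properties of a general $\cL\in B(1,d,r+1)$ on a Petri curve: we have $h^0(\cL)=r+1$, and $\cL$ is generated. Indeed, a base point $P$ of $\cL$ would give $\cL(-P)\in B(1,d-1,r+1)$, so the line bundles in $B(1,d,r+1)$ with a base point lie in the image of $B(1,d-1,r+1)\times C$ under $(\cM,P)\mapsto \cM(P)$; on a Petri curve this image has dimension at most $\beta(1,d-1,r+1)+1=\beta(1,d,r+1)-r<\dim B(1,d,r+1)$, and when $\beta(r,d,r+1)=0$ the locus $B(1,d-1,r+1)$ is empty. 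Theorem \ref{thm3} now applies and identifies the open subset $U\subset\GLrr$ of generated systems with an open subset of $\mbox{Gr}(r+1,H^0(\cL))=\mbox{Gr}(r+1,r+1)$, a single point. Hence $U=\{(E_0,V_0)\}$ with $V_0^*=H^0(\cL)$ and $\dim G_L=0$.

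The main step is to prove $\GLrr=U$. Let $(E,V)\in\GLrr$ and let $E'\subseteq E$ be the subsheaf generated by $V$, of rank $s$; then $V\subseteq H^0(E')$, so $(E',V)$ is a coherent subsystem. For $\alpha$ large, $\alpha$-stability forces $s=r$: otherwise the coefficient of $\alpha$ in $\mu_\alpha(E',V)=\frac{\deg E'}{s}+\alpha\frac{r+1}{s}$ would exceed $\frac{r+1}{r}$, so $\mu_\alpha(E',V)>\mu_\alpha(E,V)$ once $\alpha\gg0$. Thus $s=r$, the quotient $E/E'$ is torsion of some length $m\ge0$, and $\det E'=\cL(-D)$ for an effective divisor $D$ of degree $m$. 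Dualising the generation sequence $0\lra\cL^{-1}(D)\lra V\otimes\cO\lra E'\lra0$ gives $0\lra (E')^*\lra V^*\otimes\cO\lra \cL(-D)\lra0$, whence $V^*\subseteq H^0(\cL(-D))$ and $\cL(-D)\in B(1,d-m,r+1)$. If $m>0$ this writes $\cL=\cM(D)$ with $\cM\in B(1,d-m,r+1)$; such $\cL$ form a locus of dimension at most $\beta(1,d-m,r+1)+m=\beta(1,d,r+1)-rm<\dim B(1,d,r+1)$, and again this locus is empty when $\beta(r,d,r+1)=0$. As $\cL$ is general, $m=0$ for every $(E,V)$, so every $(E,V)$ is generated and $\GLrr=U=G_L=\{(E_0,V_0)\}$.

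I expect this last dimension count to be the main obstacle: Theorem \ref{thm3} controls only the generated locus $U$, and a priori $\GLrr$ could contain further components consisting entirely of non-generated systems; ruling these out for general $\cL$ is precisely what the Brill--Noether dimension estimate above achieves.

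Finally, for the stability and the last assertion, consider the determinant morphism $G_L(r,d,r+1)\to B(1,d,r+1)$. Both are irreducible of dimension $\beta(1,d,r+1)$ when $\beta(r,d,r+1)>0$ (and finite when $\beta(r,d,r+1)=0$), and the construction above provides a section over the generated locus, so this morphism is dominant and $(E_0,V_0)$ is a general point of $G_L(r,d,r+1)$. By \cite[Theorem 3.3(v)]{BGMMN1} the bundle $E_0$ is then stable and $(E_0,V_0)\in G_i(r,d,r+1)$ for all $i$; fixing the determinant gives $(E_0,V_0)\in\GLi$ for all $i$, exactly as in Corollary \ref{cortf}.
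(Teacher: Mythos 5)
Your core argument---reduce to the generated locus via Theorem \ref{thm3}, then rule out non-generated $(E,V)$ by passing to the subsheaf $E'$ generated by $V$ and dualising---is essentially the paper's own proof; your Brill--Noether dimension count over $B(1,d-m,r+1)$ replaces the paper's quicker observation that $h^0(\cL(-D))\ge r+1$ with $\deg D>0$ directly contradicts base-point-freeness of $\cL$ (given $h^0(\cL)=r+1$), and your explicit large-$\alpha$ argument forcing $\mathrm{rk}\,E'=r$ makes precise a point the paper outsources. But there is one unjustified step: from $0\lra(E')^*\lra V^*\otimes\cO\lra\cL(-D)\lra0$ you conclude $V^*\subseteq H^0(\cL(-D))$, which requires $h^0((E')^*)=0$; the paper cites \cite[Theorem 3.1(3)]{bbn} for exactly this. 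It is fixable by your own method: a nonzero map $E'\to\cO$ composed with the surjection $V\otimes\cO\to E'$ corresponds to a nonzero $\lambda\in V^*$, and $W=\ker\lambda$, of dimension $r$, consists of sections of $\ker(E'\to\cO)$, yielding a subsystem of rank at most $r-1$ with $r$ sections, impossible for $\alpha\gg0$ since $r/(r-1)>(r+1)/r$. As written, though, this is a gap.

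The more serious problem is your final step. First, when $\beta(r,d,r+1)=0$ the corollary asserts stability of $E_0$ for \emph{any} $\cL$ in the finite set $B(1,d,r+1)$; there $G_L(r,d,r+1)$ and $B(1,d,r+1)$ are finite, ``general point'' is vacuous, and your dominance argument yields nothing. Second, \cite[Theorem 3.3(v)]{BGMMN1} is invoked in this paper only in the range $k\le r$ (Corollaries \ref{cortf} and \ref{cortor}, under the hypotheses of Theorems \ref{thm1} and \ref{thm2}); it is not available for $k=r+1$, and indeed the assertion that the general element of $G_L(r,d,r+1)$ has stable underlying bundle is essentially the dual-span stability statement you are trying to establish, so the citation is out of range and circular in spirit. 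The paper instead quotes \cite{but} or \cite[Proposition 4.1]{bp}, which give stability of the dual span $E_0$ for each generated $\cL$ with $h^0(\cL)=r+1$ on a Petri curve, covering the case $\beta(r,d,r+1)=0$ as well; once $E_0$ is stable, $(E_0,V_0)\in\GLi$ for all $i$ follows at once, as in your last line. (Note also that your dominance argument for $\beta(r,d,r+1)>0$ silently uses the irreducibility of $G_L(r,d,r+1)$; this is legitimately available from \cite[Theorem 5.11]{BGMN} and \cite[Theorem 3.1]{bbn}, as recorded in the paragraph preceding the corollary, but should be cited.)
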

\begin{proof} We have $\beta(1,d,r+1)=\beta(r,d,r+1)$, so the condition on $\beta(r,d,r+1)$ implies that $h^0(\cL)=r+1$ and we must take $V^*=H^0(\cL)$ in \eqref{dual}; so $G_L$ is a single point. If $(E,V)\in \GLrr$ is not generated, let $E'$ be the subsheaf of $E$ generated by $V$. Then $H^0(E'^*)=0$ by \cite[Theorem 3.1(3)]{bbn} and $\det E'=\cL(-D)$ for some effective divisor of degree $t>0$. Applying \eqref{gen} to $(E',V)$ and dualising, we obtain $h^0(\cL(-D))\ge r+1$, contradicting the fact that $\cL$ is generated with $h^0(\cL)=r+1$. So $\GLrr=G_L$. The facts that $(E_0,V_0)\in \GLi$ for all $i$ and that $E_0$ is stable except when $g=r=2$ and $d=4$ are proved in \cite[Proposition 4.1]{bp} (see also \cite[Theorem 2]{but}).
\end{proof}

\begin{rem}\begin{em}
 Suppose that  $C$ is a Petri curve, $g\ge2$, $\beta(r,d,r+1)>g$ and $\cL\in B(1,d,r+1)=J^d(C)$ is general. Then $h^0(\cL)=d+1-g>r+1$, $h^1(\cL)=0$ and 
 $$\dim G_L=\beta(r,d,r+1)-g=(r+1)(d-r-g)>0.$$ 
 Moreover $\GLrr$ is irreducible and hence $\GLrr=G_L$.
 For general $V^*\subset H^0(\cL)$, one can ask whether  the bundle $E$ given by \eqref{dual} is stable (see \cite[Conjecture 9.5]{bbn}). The weaker conjecture that $E$ is semistable is a particular case of a conjecture of D. C. Butler  \cite[Conjecture 2]{but}  and has recently been proved \cite{bbn2} (see also \cite{afo}). Stability is true in many cases (see the list following Remark 9.7 in \cite{bbn} and the improvements to this list in \cite{bbn2}). Semistability is sufficient to show that $(E,V)\in G_i(r,\cL,r+1)$ for all $i$.
 \end{em}\end{rem}

\section{Examples}\label{ex} 

We have proved sharpness of the conjectured bound \eqref{conjeq} for $k\le r+1$ under a mild generality condition on $\cL$. The following example shows that some such condition is necessary.
\begin{ex}\begin{em}\cite[Theorem 1.1]{o1}
Suppose $r=2$ and $h^1(\cL)=0$. Let $\delta$ be the smallest degree of an effective divisor $\Delta$ such that $h^1(\cL(-\Delta))>0$. Then every irreducible component of $\GL$ has dimension at least
$$\beta(2,d,k)-g+\left(\begin{array}{c}k-\delta\\2\end{array}\right).$$
For example, if $\cL=K_C(x-y)$ with $x\ne y$, then $\delta=1$. Provided $k\ge3$, the bound of \eqref{conjeq} cannot be sharp.
\end{em}\end{ex}

Our second example, also noted by Osserman \cite[Example 5.2]{o2}, shows that, for $k=r+2$, something more is required.

\begin{ex}\begin{em} 
Let  $C$ be a general curve of genus $g=2a+1$ with $a\ge5$ and let $(r,d,k)=(2,2a+4,4)$. By \cite[Theorem 1.3]{fo}, there is a subvariety of $B(1,2a+4,5)$ of dimension $2$, whose general point is a line bundle $\cL$ with $h^0(\cL)=5$ such that $B(2,\cL,4)\ne\emptyset$ (in fact $B(2,\cL,4)$ is a point). However, $h^1(\cL)=1$, so Osserman's bound is $7-g$, which is negative. Osserman observes that one can allow for this by noting that $\beta(1,2a+4,5)=g-5$ and $(g-5)+(7-g)=2$; in other words, the bound has a relative validity. However, if one looks at individual line bundles $\cL$, an additional term is clearly required.
\end{em}\end{ex}

\end{document}